\newtheorem{thm}{Theorem}%[section]
\newtheorem{alg}[thm]{Algorithm}
\newtheorem{lem}[thm]{Lemma}
\newtheorem{prop}[thm]{Proposition}
\newtheorem{cor}[thm]{Corollary}
\theoremstyle{remark}
\newtheorem*{rem}{Remark}
\newenvironment{poc}[1][]{\begin{proof}[\ifthenelse{\equal{#1}{}}{Proof of correctness}{Proof of correctness of #1}]}{\end{proof}}
\newcommand{\gb}{\mathfrak{b}}
\newcommand{\go}{\mathfrak{o}}
\newcommand{\gp}{\mathfrak{p}}
\newcommand{\gq}{\mathfrak{q}}
\newcommand{\gr}{\mathfrak{r}}
\newcommand{\gs}{\mathfrak{s}}
\newcommand{\gt}{\mathfrak{t}}
\newcommand{\Fq}{\FF_q}
\newcommand{\FF}{\mathbb{F}}
\newcommand{\ZZ}{\mathbb{Z}}
\newcommand{\GB}{\mathfrak{B}}
\newcommand{\SA}{\mathscr{A}}
\newcommand{\SB}{\mathscr{B}}
\newcommand{\SD}{\mathcal{D}}
\newcommand{\SL}{\mathscr{L}}
\newcommand{\K}{K}
\newcommand{\setS}{\mathfrak{S}}
\newcommand{\setT}{\mathfrak{T}}
\newcommand{\Sprime}{\setS'}
\newcommand{\SmS}{\setS\setminus\Sprime}
\newcommand{\Ell}{\mathcal{L}}
\newcommand{\card}[1]{|#1|}
\newcommand{\units}[1]{#1^\times}
\newcommand{\squares}[1]{#1^{\times2}}
\newcommand{\sing}[1]{E_{#1}}
\newcommand{\singXS}{\sing{\setS}}
\newcommand{\Sing}[1]{\mathbb{E}_{#1}}
\newcommand{\SingX}{\Sing{}}
\newcommand{\SingXS}{\Sing{\setS}}
\newcommand{\SingXSprime}{\Sing{\Sprime}}
\newcommand{\st}{\mathrel{\mid}}
\DeclareMathOperator{\Div}{Div}
\newcommand{\DivX}{\Div X}
\DeclareMathOperator{\Pic}{Pic}
\newcommand{\PicX}{\Pic X}
\newcommand{\PicZeroX}{\Pic^0 X}
\newcommand{\PPic}[1]{\sfrac{\Pic #1}{2\Pic #1}}
\newcommand{\PPicX}{\PPic{X}}
\DeclareMathOperator{\rank}{rk}
\newcommand{\rk}[1][]{\ifthenelse{\equal{#1}{}}{\rank_{2}}{\rank_{#1}}}
\newcommand{\class}[2][]{\ifthenelse{\equal{#1}{}}{[#2]}{[#2]_#1}}   %{[#1]}
\DeclareMathOperator{\lin}{span}
\DeclareMathOperator{\dv}{div}
\newcommand{\divX}{\dv_X}
\DeclareMathOperator{\ord}{ord}
\newcommand{\un}[1][]{\ifthenelse{\equal{#1}{}}{\units{\K}}{\units{#1}}}
\newcommand{\sqgd}[1][]{\ifthenelse{\equal{#1}{}}{\sfrac{\un}{\squares{\K}}}{\sfrac{\units{#1}}{\squares{#1}}}}
\newcommand{\term}[1]{\emph{#1}}
\newcommand{\la}{\lambda}
\newcommand{\lap}{\la_\gp}
\newcommand{\even}{\equiv 0\pmod{2}}
\newcommand{\odd}{\equiv 1\pmod{2}}
\newcommand{\subgrp}[1]{\langle#1\rangle}
\newcommand{\Legendre}[2]{\genfrac{(}{)}{}{}{#1}{#2}}
\def\clap#1{\hbox to 0pt{\hss#1\hss}}
\newwrite\refs
\renewcommand\@setref[3]{%
        \ifx#1\relax
                \write\refs{'#3' \thepage\space undefined}%
                \protect \G@refundefinedtrue
                \nfss@text{\reset@font\bfseries ??}%
                \@latex@warning{Reference `#3' on page \thepage\space undefined}%
        \else
                \write\refs{'#3' \thepage\space \expandafter\@secondoftwo#1}%
                \expandafter#2#1\null
        \fi
}
\title{Computing singular elements modulo squares}
\author{Przemys{\l}aw Koprowski}
\address{Institute of Mathematics, University of Silesia, Bankowa 14, 40-007 Katowice, Poland } 
\email{przemyslaw.koprowski@us.edu.pl}
\begin{document}
\begin{abstract}
The group of singular elements was first introduced by Helmut Hasse and later it has been studied by numerous authors including such well known mathematicians as: Cassels, Furtw\"{a}ngler, Hecke, Knebusch, Takagi and of course Hasse himself; to name just a few. The aim of the present paper is to present algorithms that explicitly construct groups of singular and $S$-singular elements (modulo squares) in a global function field.
\end{abstract}
\maketitle

\section{Introduction}
Let $\K$ be a global function field of odd characteristic and $\Fq$ be its full field of constants, where $q$ is a power of an odd prime. The set of classes of discrete valuations on~$\K$ can be viewed as a smooth complete curve~$X$ over~$\Fq$. Assume that $\setS$ is a finite (possibly empty) subset of~$X$. An element $\la\in \un$ is called \term{$\setS$-singular} if it has even valuations everywhere outside~$\setS$. We denote
\[
\singXS = \{\la\in \un\st \la\text{ is $\setS$-singular}\}
\]
the group of $\setS$-singular elements of~$\K$. This group is a union of cosets of~$\squares{\K}$, hence we take a quotient group and denote it
\[
\SingXS := \sfrac{\singXS}{\squares{\K}} = \{\la\in \sqgd\st \ord_\gp\la\even\text{ for }\gp\in X\setminus\setS\}.
\]
If $\setS$ is empty we just write~$\SingX$, rather than a bit awkward $\Sing{\emptyset}$. In particular the group $\singXS$ can be expressed as the extension of~$\SingXS$ by~$\squares{\K}$.  The group~$\SingX$ is sometimes called \term{$2$-Selmer group} (see \cite{Cohen2000, DV2018, Lemmermeyer2006}). The quotient group~$\SingXS$ is a finite elementary $2$-group, hence a finitely dimensional vector space over~$\FF_2$. As such it can be explicitly represented by its (finite) basis.

The aim of this paper is to present algorithms for constructing such bases. Of course, this problem is not completely new. For example, H.~Cohen in \cite{Cohen2000} describes an algorithm for computing the group~$\SingX$ in case when~$\K$ is a \emph{number} field. The main contribution of the present paper is twofold. First, we propose a use of auxiliary sets of places ``compatible'' (the term is explained later) with the constructed bases, that substantially simplify computation of coordinates of any given singular (respectively $\setS$-singular) element of~$\un$ (see Propositions~\ref{prop:coords} and~\ref{prop:S-coords}). Secondly, since computation of a Picard group may be a time consuming process, in Section~\ref{sec:random} we introduce a randomized method for constructing singular elements without knowing the Picard group of~$X$.

Throughout this paper we use the following notation: for a place~$\gp$ of~$X$ by $\ord_\gp$ we denote the associated valuation. Observe that the parity of valuation is fixed throughout any given square class of~$K$, hence we tend to treat $\ord_\gp$ as a function from $\sqgd$ to~$\ZZ_2$. In addition, for $\la\in \sqgd$ by $\Legendre{\la}{\gp}$ we denote the natural generalization of the Legendre symbol to the whole square class group of~$K$, that is
\[
\Legendre{\la}{\gp} =
\begin{cases}
1 & \text{if $\la$ is a square in $\K_\gp$ (in particular $\ord_\gp\la$ is even),}\\
0 & \text{if $\ord_\gp\la$ is odd,}\\
-1 & \text{if $\ord_\gp\la$ is even but $\la$ is not a square in~$K_\gp$.}
\end{cases}
\]
Next, for a divisor $\SD\in \DivX$, by $\class{\SD}$ we denote its class in the Picard group $\PicX$ and by $\dim\SD$ the dimension of the Riemann--Roch space $\Ell(\SD)$ of~$\SD$, where
\[
\Ell(\SD) := \bigl\{ \lambda\in \K\st \divX\lambda\geq -\SD\bigr\}\cup \{0\}.
\]
We will frequently use the fact that if $G$ is an abelian group, then $\sfrac{G}{2G}$ has a natural structure of an $\FF_2$-linear space. The dimension of this space is called the $2$-rank of~$G$ and denoted $\rk G$. Furthermore, when dealing with a finite set of places $\gp_1, \dotsc, \gp_n\in X$, we will consider the cosets modulo $2\PicX$ of the classes $\class{\gp_1}, \dotsc, \class{\gp_n}\in \PicX$. In order to simplify wording, we shall write ``$\gp_1, \dots, \gp_n$ are linearly independent (or form a basis) in $\PPicX$'', meaning actually that these are the cosets modulo $2\PicX$ of their classes that are linearly independent (respectively form a basis) in $\PPicX$. We hope that the reader will excuse us this abuse of terminology, for sake of clarity of exposition. 

The algorithms presented in this paper rely on some known procedures, like computation of the Riemann--Roch space of a divisor (see e.g. \cite{Hess2002}) or construction of the Picard group of~$X$ (see e.g. \cite{Hess1999, Hess2007}). Moreover, we assume availability of standard linear algebra routines to deal with vector spaces over fields and lattices over~$\ZZ$.

\section{Group of singular elements}
Let $\GB = \{\gb_1, \dotsc, \gb_n\}\subset X$ be a set of places and $\SB := \{\beta_1, \dotsc, \beta_n\}\subset \SingX$ be a set of square classes of singular elements. The two sets are said to be \term{compatible} if
\[
\Legendre{\beta_i}{\gb_i} = -1
\qquad\text{and}\qquad
\Legendre{\beta_i}{\gb_j} = 1
\]
for all pairs of distinct indices $i,j\leq n$. In other words, $\beta_i$ is a local square at all $\gp_j$ but~$\gp_i$, where it is a non-square.

\begin{prop}\label{prop:compatible_bases}
Let $\GB\subset X$ and $\SB\subset \SingX$ be two compatible sets. The following conditions are equivalent:
\begin{itemize}
\item $\SB$ is linearly independent in~$\SingX$;
\item $\GB$ is linearly independent in $\PPicX$.
\end{itemize}
In particular, $\SB$ is a basis of~$\SingX$ if and only if $\GB$ is a basis of~$\PPicX$.
\end{prop}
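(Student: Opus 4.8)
The plan is to encode the compatibility relation as the statement that a natural bilinear pairing has the identity matrix as its Gram matrix. Writing the generalized Legendre symbol additively, I would introduce the $\FF_2$-valued pairing
\[
B\colon \SingX\times\PPicX\longrightarrow\FF_2,\qquad B\bigl(\la,\class{\gp}\bigr)=\tfrac12\bigl(1-\Legendre{\la}{\gp}\bigr),
\]
extended to arbitrary divisors by linearity in the second variable. Since every $\la\in\SingX$ has even valuations, the symbol $\Legendre{\la}{\gp}$ is always $\pm1$, so $B$ takes values $0$ or $1$. Three points make $B$ legitimate: it is additive in the first variable, because on singular elements the Legendre symbol is the quadratic residue character of the unit part and hence multiplicative; it vanishes on $\squares{\K}$, because a global square is a local square at every place; and being $\FF_2$-valued it factors through $2\PicX$ as soon as it is defined on $\PicX$. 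The one genuinely nontrivial point is that $B$ is well defined on divisor classes, i.e.\ that it does not change when a principal divisor is added; this is precisely the reciprocity law $\prod_{\gp\in X}\Legendre{\la}{\gp}^{\ord_\gp f}=1$, valid for every singular $\la$ and every $f\in\un$ (it is the product formula for the quadratic Hilbert symbol, which for $\la$ of everywhere-even valuation collapses to this form). Granting it, compatibility says exactly that $B(\beta_i,\class{\gb_j})=\delta_{ij}$, so the Gram matrix of $\SB$ against $\GB$ is $I_n$.

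With the identity Gram matrix in hand, both independence assertions drop out of the same one-line computation, read on the two sides of $B$. If $\prod_i\beta_i^{c_i}\in\squares{\K}$ is a relation in $\SingX$, then evaluating $B(\cdot,\class{\gb_j})$ and using additivity together with the vanishing on squares yields $c_j=B\bigl(\prod_i\beta_i^{c_i},\class{\gb_j}\bigr)=0$ for each $j$; this needs only that squares are local squares. Dually, if $\sum_j c_j\class{\gb_j}\in 2\PicX$ is a relation in $\PPicX$, then evaluating $B(\beta_i,\cdot)$ and using that $B$ is well defined on $\PPicX$ (this is where reciprocity is used) yields $c_i=0$ for each $i$. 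Hence, under compatibility, $\SB$ is independent in $\SingX$ and $\GB$ is independent in $\PPicX$; in particular the two stated conditions are equivalent.

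It remains to promote ``independent'' to ``basis''. As $\card{\SB}=\card{\GB}=n$ and both families are independent, $\SB$ is a basis precisely when $n=\dim_{\FF_2}\SingX$ and $\GB$ is a basis precisely when $n=\dim_{\FF_2}\PPicX$; so the two basis statements coincide once the two dimensions are shown equal. For that I would use the half-divisor map $\la\mapsto\class{\tfrac12\divX\la}$, well defined on square classes, which fits into the short exact sequence
\[
0\longrightarrow\sqgd[\Fq]\longrightarrow\SingX\longrightarrow\PicX[2]\longrightarrow 0,
\]
its kernel being the class of a non-square constant (nontrivial because $\Fq$ is the full field of constants) and its image the whole $2$-torsion. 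Thus $\dim_{\FF_2}\SingX=1+\rk\bigl(\PicX[2]\bigr)$. On the other side, F.~K.~Schmidt's theorem supplies a divisor of degree $1$, whence $\PicX\cong\PicZeroX\oplus\ZZ$; as $\PicZeroX$ is finite, $\rk(\PicZeroX[2])=\rk(\PicZeroX/2\PicZeroX)$, and the free summand contributes one further generator modulo squares, giving $\dim_{\FF_2}\PPicX=\rk\bigl(\PicX[2]\bigr)+1$. The two dimensions agree, completing the proof.

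The hard part will be the reciprocity law underlying the well-definedness of $B$ on Picard classes; once that is in place, the rest is linear algebra over $\FF_2$ plus the standard exact sequence above. An alternative route is to prove that $B$ is a perfect pairing outright: its non-degeneracy on the $\SingX$-side is the Hasse principle ``a local square at every place is a global square'', and on the Picard side it is the dual reciprocity statement. Perfectness would deliver the equality of dimensions for free, but it merely relocates the difficulty to the Picard-side non-degeneracy, which I expect to be the most delicate ingredient.
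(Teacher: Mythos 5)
Your proof is correct, but there is nothing in the paper itself to compare it against: the paper does not prove Proposition~\ref{prop:compatible_bases} at all, it simply refers the reader to Section~3 of \cite{CK2020}. Judged on its own merits, your argument is sound and self-contained. The pairing $B\bigl(\la,\class{\gp}\bigr)=\tfrac12\bigl(1-\Legendre{\la}{\gp}\bigr)$ is legitimate for exactly the reasons you list: on singular elements the symbol is the residue character of the local unit part (hence multiplicative and never $0$), global squares pair trivially, and well-definedness on $\PPicX$ is the collapse of the tame Hilbert symbol, $(\la,f)_\gp=\Legendre{\la}{\gp}^{\ord_\gp f}$ when $\ord_\gp\la$ is even, combined with the product formula $\prod_\gp(\la,f)_\gp=1$ --- a genuine but standard theorem, which you correctly identify as the one nontrivial external input. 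Two observations. First, your argument actually proves \emph{more} than the stated equivalence: compatibility alone forces both $\SB$ and $\GB$ to be linearly independent (any relation on one side is annihilated by pairing against the other family), so the two bulleted conditions each hold unconditionally and their equivalence is automatic; the real content of the proposition is the final basis statement. Second, you correctly recognize that this final statement reduces to $\dim_{\FF_2}\SingX=\dim_{\FF_2}\PPicX$, and your computation of both sides is right: the map $\la\mapsto\class{\tfrac12\divX\la}$ has kernel $\sqgd[\Fq]$ (here you rightly invoke that $\Fq$ is the \emph{full} constant field) and image the whole $2$-torsion of $\PicX$, while F.~K.~Schmidt's theorem gives $\PicX\cong\PicZeroX\oplus\ZZ$ and the finiteness of $\PicZeroX$ gives $\rk\bigl((\PicZeroX)[2]\bigr)=\rk\PicZeroX$. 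This count is consistent with the paper's own usage, e.g.\ the identity $\rk\PicZeroX=\rk\SingX-1$ in the proof of Corollary~\ref{cor:probability}. What your route buys is independence from the external reference; note also that your main argument needs only reciprocity, not the global squares theorem --- the latter enters only in the alternative ``perfect pairing'' variant you sketch at the end, and can be dispensed with entirely.
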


For the proof of the proposition see \cite[Section~3]{CK2020}. Using a pair of compatible bases has an evident advantage over working with a single basis. It is easy to get coordinates with respect to one basis from Legendre symbols computed against the other one. The following proposition is a special case of \cite[Proposition~5]{CK2020}. 

\begin{prop}\label{prop:coords}
Let $\SB = \{\beta_1, \dotsc, \beta_n\}$ be a basis of~$\SingX$ and $\GB = \{\gb_1, \dotsc, \gb_n\}$ be a compatible set of places.
\begin{enumerate}
\item If $\la\in \un$ is a singular element, then its coordinates $\varepsilon_1, \dotsc, \varepsilon_n\in \{0,1\}$ with respect to~$\SB$ are given by a formula
\[
(-1)^{\varepsilon_i} = \Legendre{\la}{\gb_i},\qquad\text{for }i\in \{1,\dotsc, n\}.
\]
\item If $\gp\in X$ is a place, then the coordinates $e_1, \dotsc, e_n\in \{0,1\}$ of~$\class{\gp} + 2\PicX$ in $\PPicX$ with respect to the basis $\GB$ satisfy the conditions
\[
(-1)^{e_j} = \Legendre{\beta_j}{\gp},\qquad\text{for }j\in \{1,\dotsc, n\}.
\]
\end{enumerate}
\end{prop}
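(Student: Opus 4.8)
The plan is to package both statements as consequences of a single bilinear pairing, together with the observation that compatibility makes $\SB$ and $\GB$ into dual bases. Concretely, I would consider the map
\[
\langle\cdot,\cdot\rangle\colon \SingX\times \PPicX\to \FF_2,
\qquad
\langle\beta,\class{\gp}\rangle =
\begin{cases}
0 & \text{if }\Legendre{\beta}{\gp}=1,\\
1 & \text{if }\Legendre{\beta}{\gp}=-1,
\end{cases}
\]
which records the Legendre symbol additively. Since $\setS=\emptyset$, every $\beta\in \SingX$ has even valuation at every place, so $\Legendre{\beta}{\gp}\in\{\pm1\}$ is always defined; and multiplicativity of the residue quadratic character shows that the pairing is additive in its first argument, that is, $\Legendre{\beta\beta'}{\gp}=\Legendre{\beta}{\gp}\Legendre{\beta'}{\gp}$.

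First I would dispose of statement~(1), which needs nothing beyond this additivity. Writing the square class of $\la$ as $\prod_j \beta_j^{\varepsilon_j}$ in $\SingX$ and applying $\Legendre{\cdot}{\gb_i}$, the product collapses by compatibility: every factor with $j\neq i$ contributes $\Legendre{\beta_j}{\gb_i}=1$, while the surviving factor gives $\Legendre{\beta_i}{\gb_i}^{\varepsilon_i}=(-1)^{\varepsilon_i}$. Hence $\Legendre{\la}{\gb_i}=(-1)^{\varepsilon_i}$, as claimed.

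Statement~(2) is the dual assertion, and it requires that the pairing be well defined in its second argument, i.e.\ that $\gp\mapsto\Legendre{\beta}{\gp}$ descend from divisors to $\PPicX$. The part modulo $2\PicX$ is free, since each Legendre symbol lies in $\{\pm1\}$ and hence squares to $1$; the real content is invariance under linear equivalence. This is where I expect the main obstacle: I must establish the reciprocity law $\prod_\gp \Legendre{\beta}{\gp}^{\ord_\gp f}=1$ for every $f\in\un$ and every singular $\beta$. The clean way is to identify, at each place, the local factor $\Legendre{\beta}{\gp}^{\ord_\gp f}$ with the quadratic Hilbert symbol $(\beta,f)_\gp$; because $\ord_\gp\beta$ is even the usual tame-symbol correction terms drop out and this identification holds exactly, whereupon the desired product formula is precisely the global reciprocity law $\prod_\gp(\beta,f)_\gp=1$.

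With well-definedness in hand, statement~(2) follows formally. By Proposition~\ref{prop:compatible_bases} the hypothesis that $\SB$ is a basis of $\SingX$ forces $\GB$ to be a basis of $\PPicX$, so I may write $\class{\gp}=\sum_k e_k\class{\gb_k}$ in $\PPicX$. Applying the homomorphism $\Legendre{\beta_j}{\cdot}$ and invoking compatibility once more kills every term with $k\neq j$ and leaves $\Legendre{\beta_j}{\gp}=\Legendre{\beta_j}{\gb_j}^{e_j}=(-1)^{e_j}$. In short, compatibility says exactly that $\langle\beta_i,\class{\gb_j}\rangle=\delta_{ij}$, so $\SB$ and $\GB$ are dual bases for the pairing, and the two coordinate formulas are just the standard expressions of coordinates through a pair of dual bases, one read off in each of the two arguments.
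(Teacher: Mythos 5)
Your proof is correct, but note that the comparison here is lopsided: the paper offers no proof of this proposition at all --- it is handled as ``a special case of \cite[Proposition~5]{CK2020}'' --- so your argument is a self-contained substitute for an external citation rather than an alternative to an in-paper argument. That said, your proof holds up under scrutiny, and you identify the crux correctly. Part~(1) genuinely needs only multiplicativity of $\Legendre{\cdot}{\gb_i}$ on elements of even $\gb_i$-valuation (clear after writing such an element as $\pi^{2a}u$ with $u$ a local unit, so that the symbol is the residue quadratic character of~$u$) together with compatibility. For part~(2) you rightly isolate the one nontrivial point: that $\gp\mapsto\Legendre{\beta}{\gp}$ must descend from $\DivX$ to $\PPicX$, that factoring through $2\DivX$ is automatic since the values are signs, and that invariance under linear equivalence is a reciprocity statement. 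Your Hilbert-symbol verification of that reciprocity is exact: writing $\beta=\pi^{2a}u$ and $f=\pi^{b}v$ locally, the tame formula gives $(\beta,f)_\gp=\chi_\gp\bigl((-1)^{2ab}\,u^{b}v^{-2a}\bigr)=\chi_\gp(u)^{b}=\Legendre{\beta}{\gp}^{\ord_\gp f}$, where the sign and the $v$-term vanish precisely because $\ord_\gp\beta$ is even, and the product formula $\prod_\gp(\beta,f)_\gp=1$, valid for global function fields of odd characteristic, then kills principal divisors. You also correctly invoke Proposition~\ref{prop:compatible_bases} so that the coordinates $e_j$ in~(2) exist at all. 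The dual-bases packaging, i.e.\ that compatibility says exactly $\langle\beta_i,\class{\gb_j}\rangle=\delta_{ij}$ for the sign pairing $\SingX\times\PPicX\to\FF_2$, is an attractive way to see the two formulas as one statement read in the two arguments; what it buys is symmetry and brevity, at the cost of taking global Hilbert reciprocity as input, which is standard and legitimate here.
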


We may now present our first algorithm, that constructs a basis of the group of singular elements (modulo squares) from a compatible basis of $\PPicX$. Recall that a finitely generated abelian group~$G$ is presented in \term{Smith Normal Form} (SNF) if it is given as a direct sum of cyclic groups
\[
G = C_1\oplus \dotsb\oplus C_k,
\]
where the order of each~$C_i$ divides the order of its successors. 

\begin{alg}\label{alg:singular_basis}
Given a finite set $\GB = \{\gb_1, \dotsc, \gb_n\}\subset X$ of places that forms a basis of $\PPicX$, this algorithm constructs a compatible basis~$\SB$ of~$\SingX$.
\begin{enumerate}
\item Initialize $\SA := \{\zeta\}$, where $\zeta\in \units{\Fq}$ is a non-square constant.
\item\label{st:Pic0_SNF} Let $\subgrp{\class{\SD_1}}\oplus\dotsb \oplus \subgrp{\class{\SD_m}}$ be SNF presentation of $\PicZeroX$, for some divisors $\SD_1,\dotsc, \SD_m$.
\item Find the minimal index $k\leq m$ such that $\class{\SD_k}$ has an even order or set $k := m + 1$ if all the orders are odd.
\item\label{st:EE_X_loop} For every index $i\in  \{k, \dotsc, m\}$ proceed as follows:
	\begin{enumerate}
	\item Let $d$ be be the order of $\class{\SD_i}$ in $\PicZeroX$.
	\item Find a nonzero element~$\alpha$ in the Riemann--Roch space $\Ell(d\cdot \SD_i)$.
	\item Append $\alpha$ to~$\SA$.
	\end{enumerate}
\item\label{st:singular_coords} Using Proposition~\ref{prop:coords} for every element $\alpha_i\in \SA$, $i\leq n$ find its coordinates $\varepsilon_{i,1},\dotsc, \varepsilon_{i,n}\in \FF_2$ with respect to the sought basis~$\SB$ compatible with~$\GB$.
\item Build a change of basis matrix $( e_{i,j} ) := ( \varepsilon_{i,j} )^{-1}$.
\item Construct the basis $\SB := \{ \beta_1, \dotsc, \beta_n \}$ setting
\[
\beta_i := \alpha_1^{e_{i,1}}\dotsm \alpha_n^{e_{i,n}},\qquad\text{for }i\in \{1,\dotsc, n\}.
\]
\item\label{st:output_SB} Output $\SB$
\end{enumerate}
\end{alg}

\begin{poc}
The first part of the algorithm (steps \ref{st:Pic0_SNF}--\ref{st:EE_X_loop}) is nothing else but an adaptation of \cite[Definition~5.2.7]{Cohen2000} to the case of function fields. On the other hand, steps \ref{st:singular_coords}--\ref{st:output_SB} are just a standard change of basis. Thus, the correctness of the algorithm follows immediately from Propositions~\ref{prop:compatible_bases} and~\ref{prop:coords}.
\end{poc}

\begin{rem}
Observe that the $2$-rank of~$\SingX$ equals the number of generators of $\PicZeroX$ in SNF that have even orders. Therefore, when $\PicZeroX$ has odd order, then $k$ is set to $m+1$ and so the loop in step~\eqref{st:EE_X_loop} is empty.
\end{rem}

A general idea how to construct a basis of~$\SingX$ is now clear. First we find a set~$\GB$ of places whose classes form a basis of $\PPicX$, then we build a compatible basis~$\SB$. For sake of completeness let us write it down explicitly. In the next algorithm we assume that we have a method for constructing a set of divisors whose classes generate~$\PicX$. Such an algorithm is described e.g. in \cite{Hess2007}.

\begin{alg}\label{alg:singular_group}
Given a global function field~$\K$, this algorithm constructs a basis \textup(over $\FF_2$\textup) of the group~$\SingX$ of singular elements modulo squares.
\begin{enumerate}
\item Find divisors $\SD_1, \dotsc, \SD_n$ whose classes generate~$\PicX$.
\item Find a set of indices~$J\subset \{1,\dotsc, n\}$ such that the set $\{[\SD_j]+2\PicX\st j\in J\}$ generate the quotient group $\PPicX$.
\item Let $\{ \gp_{j,1}, \dotsc, \gp_{j,k_j}\}$ be the support of $\SD_j$ for every $j\in J$.
\item Using linear algebra find a maximal linearly independent \textup(in $\PPicX$\textup) subset~$\GB$ of $\{ \gp_{j,i}\st j\in J, i\leq k_j\}$.
\item Execute Algorithm~\ref{alg:singular_basis} to construct a basis~$\SB$ compatible with~$\GB$.
\item Output~$\SB$.
\end{enumerate}
\end{alg}

Correctness of the above algorithm follows from the preceding discussion. 

\section{Group of $\setS$-singular elements}
Now we turn our attention to a group of $\setS$-singular elements for some finite (generally nonempty) subset $\setS\subset X$. This task is substantially harder. We begin with a proposition describing the structure of the group~$\SingXS$.

\begin{prop}\label{prop:S-singular_basis}
Let $\SB = \{\beta_1, \dotsc, \beta_n\}$ be a basis of~$\SingX$ and $\GB = \{\gb_1, \dotsc, \gb_n\}$ a compatible set of places. Further let $\setS\subset X$ be a finite set disjoint with~$\GB$ and $\Sprime\subseteq \setS$ be a maximal subset of~$\setS$ linearly independent in $\PPicX$. Then:
\begin{enumerate}
\item For every place $\gp\in\SmS$, there is a $\bigl(\Sprime\cup\{\gp\}\bigr)$-singular element $\lap\in\un$ such that 
\[
\ord_\gp\lap\odd
\qquad\text{and}\qquad
\Legendre{\lap}{\gb} = 1\quad\text{for every }\gb\in \GB.
\]
\item The set $\SB\cup\{\lap\st\gp\in \SmS\}$ is a basis of~$\SingXS$.
\end{enumerate}
\end{prop}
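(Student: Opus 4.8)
The plan is to treat the two parts in turn: in (1) I would construct $\lap$ by hand, and in (2) I would verify the three standard basis conditions (membership, linear independence, spanning), with the genuine content concentrated in the spanning step. Throughout I use that multiplying by a singular element changes no valuation parity and, at places outside $\setS$, no Legendre symbol except in the controlled way dictated by compatibility.

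For part (1), observe that since $\Sprime$ is a \emph{maximal} subset of $\setS$ that is independent in $\PPicX$ and $\gp\in\SmS$, the class $\class{\gp}$ must lie in the span of $\{\class{\gq}\st \gq\in\Sprime\}$ modulo $2\PicX$. Hence there exist a subset $\setT\subseteq\Sprime$ and a divisor $\SD$ with $\gp+\sum_{\gq\in\setT}\gq-2\SD$ principal, say equal to $\dv\mu$ for some $\mu\in\un$. Reading off the coefficients of this principal divisor, $\ord_\gr\mu$ is odd exactly for $\gr\in\{\gp\}\cup\setT\subseteq\Sprime\cup\{\gp\}$ and even everywhere else, so $\mu$ is already $\bigl(\Sprime\cup\{\gp\}\bigr)$-singular with $\ord_\gp\mu\odd$. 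It then remains to correct the symbols against $\GB$. As $\GB$ is disjoint from $\setS$, each $\ord_{\gb_j}\mu$ is even, so $\Legendre{\mu}{\gb_j}\in\{\pm1\}$, and I would set $\lap:=\mu\cdot\prod_{j}\beta_j^{\delta_j}$ where $\delta_j\in\{0,1\}$ is chosen so that $(-1)^{\delta_j}=\Legendre{\mu}{\gb_j}$. The $\beta_j$ are singular, so they do not disturb the valuation parities (hence $\lap$ remains $\bigl(\Sprime\cup\{\gp\}\bigr)$-singular with odd order at $\gp$), and compatibility $\Legendre{\beta_i}{\gb_j}=-1$ for $i=j$, $=1$ otherwise, makes each target symbol collapse to $\Legendre{\lap}{\gb_j}=\Legendre{\mu}{\gb_j}\cdot(-1)^{\delta_j}=1$.

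For part (2), membership is immediate: $\SB\subseteq\SingX\subseteq\SingXS$, and each $\lap$ is $\bigl(\Sprime\cup\{\gp\}\bigr)$-singular with $\Sprime\cup\{\gp\}\subseteq\setS$, hence $\setS$-singular. For linear independence, suppose a product $\prod_i\beta_i^{a_i}\cdot\prod_{\gp\in\SmS}\lap^{c_\gp}$ is a square, with $a_i,c_\gp\in\FF_2$. Fixing $\gp\in\SmS$ and taking $\ord_\gp$ modulo $2$: the $\beta_i$ contribute even orders, each $\la_{\gp'}$ with $\gp'\ne\gp$ contributes an even order at $\gp$ (as $\gp\notin\Sprime\cup\{\gp'\}$), and $\lap$ contributes an odd order; so the parity at $\gp$ is $c_\gp$, which must vanish. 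Thus all $c_\gp=0$, and the residual relation $\prod_i\beta_i^{a_i}\in\squares\K$ forces all $a_i=0$ because $\SB$ is a basis of $\SingX$.

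Spanning is where I expect the real work, and it is the step that crucially exploits the independence of $\Sprime$. Given $\la\in\SingXS$, I would first clear the odd orders on $\SmS$ by setting $\la':=\la\cdot\prod_{\gp\in\SmS}\lap^{\,c_\gp}$ with $c_\gp:=\ord_\gp\la\bmod 2$; the same parity bookkeeping as above shows $\ord_\gp\la'$ is even for every $\gp\in\SmS$, so $\la'$ is $\Sprime$-singular. The key claim is then that $\la'$ is in fact singular. Indeed, letting $\setT:=\{\gq\in\Sprime\st \ord_\gq\la'\odd\}$, the principal divisor $\dv\la'$ satisfies $\dv\la'\equiv\sum_{\gq\in\setT}\gq\pmod{2\Div X}$, whence $\sum_{\gq\in\setT}\class{\gq}\in 2\PicX$; since $\setT\subseteq\Sprime$ and $\Sprime$ is independent in $\PPicX$, this forces $\setT=\emptyset$, so $\la'\in\SingX$. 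Writing $\la'$ in the basis $\SB$ then expresses $\la$ as a product over $\SB\cup\{\lap\st\gp\in\SmS\}$, completing the argument. The subtle point to get right is precisely this reduction "$\Sprime$-singular $\Rightarrow$ singular", which is false without the independence hypothesis and is what pins down the correct basis size.
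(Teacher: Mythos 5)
Your proof is correct, and part (2) follows a genuinely different route from the paper. Part (1) is essentially identical: both you and the paper use maximality of $\Sprime$ to write $\class{\gp}$ in the span of the classes of $\Sprime$ modulo $2\PicX$, produce a principal divisor with odd coefficients exactly at $\gp$ and a subset $\setT\subseteq\Sprime$, and then correct the Legendre symbols at $\GB$ by multiplying with the appropriate $\beta_j$'s via compatibility. For part (2), however, the paper does not prove spanning directly: it establishes linear independence (as you do, by evaluating valuation parities at each $\gp\in\SmS$) and then matches cardinalities through a rank computation, invoking external results --- Lemma~2.3 of \cite{CKR2020} (which gives $\rk\SingXSprime=\rk\SingX$ when $\Sprime$ is independent in $\PPicX$) together with Lemma~2.5 and Proposition~2.3 of \cite{CKR2018} --- to show $\rk\SingXS=\rk\SingX+\card{\SmS}$. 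You instead give a direct spanning argument: multiply an arbitrary $\setS$-singular $\la$ by suitable $\lap$'s to make it $\Sprime$-singular, and then prove inline that an $\Sprime$-singular element must already be singular, since its divisor gives a relation $\sum_{\gq\in\setT}\class{\gq}\in2\PicX$ with $\setT\subseteq\Sprime$, which independence of $\Sprime$ in $\PPicX$ kills. This inline claim is precisely (a special case of) the cited \cite[Lemma~2.3]{CKR2020}, so in effect you have reproven the paper's black box. What your approach buys is self-containedness and a more elementary argument requiring no rank formulas for Picard groups of open curves; what the paper's approach buys is brevity, given that the auxiliary lemmas were already available to it.
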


\begin{proof}
Without loss of generality we may assume that $\setS$ is not empty. If the classes of places in~$\setS$ are linearly independent modulo~$2\PicX$, then $\setS = \Sprime$, hence the first assertion holds vacuously and the second one is an immediate consequence of \cite[Lemma~2.3]{CKR2020}, which says that in such case $\SingXS = \SingX$.

Assume now that $\Sprime = \{\gs_1,\dotsc, \gs_m\}\subsetneq \setS$ and fix a place $\gp\in \SmS$. Then the coset of $\class{\gp}$ modulo $2\PicX$ is a linear combination (over~$\FF_2$) of cosets of $\gs_1,\dotsc, \gs_m$. Hence there are: $\varepsilon_1, \dotsc, \varepsilon_m\in \{0,1\}$, a divisor~$\SD$ and an element $\la\in \un$ such that
\[
\divX\la = \gp + \sum_{i\leq m}\varepsilon_i\gs_i + 2\SD.
\]
It is then clear that $\ord_\gp\la\odd$ and $\la$ is $\bigl(\Sprime\cup\{\gp\}\bigr)$-singular. Set $\lap := \la\cdot \beta_1^{e_1}\dotsm \beta_n^{e_n}$, where $\Legendre{\la}{\gb_i} = (-1)^{e_i}$. Then $\lap$ remains $\bigl(\Sprime\cup\{\gp\}\bigr)$-singular since $\beta_1, \dotsc, \beta_n\in \SingX$. Moreover, $\lap$ is a local square at each $\gb_i\in \GB$. This proves the first assertion.

In order to prove the second assertion let $\SmS = \{\gp_1, \dotsc, \gp_n\}$. Observe that the square classes of $\la_{\gp_1}, \dotsc, \la_{\gp_n}$ are linearly independent in $\sqgd$, since otherwise we would have $1 = \la_{\gp_1}^{e_1}\dotsm \la_{\gp_n}^{e_n}$ for some $e_1, \dotsc, e_n\in \{0,1\}$, not all equal zero. But then $1$ would have a nonzero valuation at some~$\gp_i$, which is impossible. 

On the other hand, any nontrivial element of the subgroup of~$\sqgd$ generated by $\la_{\gp_1}, \dotsc, \la_{\gp_n}$ has an odd valuation at some~$\gp_i$. Therefore this group intersects $\SingX$ only at~$\{1\}$. Furthermore by \cite[Lemma~2.3]{CKR2020} we have $\rk\SingX = \rk \SingXSprime$. Thus \cite[Lemma~2.5]{CKR2018} together with \cite[Proposition~2.3]{CKR2018} yield
\begin{align*}
\rk\SingXS 
&= \rk\Pic(X\setminus\Sprime)  + \card{\setS}\\
&\hspace{2em}- \rk\lin_{\FF_2}\bigl\{ \class{\gp}+2\Pic(X\setminus\Sprime)\st \gp\in \SmS \bigr\}\\
&= (\rk\SingXSprime - \card{\Sprime}) + \card{\setS} - 0\\
&= \rk\SingX + \card{\setS\setminus\Sprime}\\
&= \rk\left( \SingX\oplus \lin_{\FF_2}\{\la_{\gp_1}, \dotsc, \la_{\gp_n}\} \right).
\end{align*}
This proves that $\SB\cup\{\lap\st\gp\in \SmS\}$ is indeed a basis of~$\SingXS$.
\end{proof}

\begin{prop}\label{prop:S-coords}
Keep the assumptions of the previous proposition and let $\SA := \{\beta_1, \dotsc, \beta_n\}\cup \{\la_1, \dotsc, \la_s\}$ be the asserted basis of~$\SingXS$. If $\mu\in\un$ is $\setS$-singular, then its coordinates $(\varepsilon_1, \dotsc, \varepsilon_n; e_1, \dotsc, e_s)$ with respect to~$\SA$ satisfy the following conditions:
\[
(-1)^{\varepsilon_i} = \Legendre{\mu}{\gb_i}
\qquad\text{and}\qquad
e_j \equiv \ord_{\gp_j}\mu\pmod{2}
\]
for all $i\in \{1,\dotsc, n\}$ and $j\in \{1,\dotsc, s\}$
\end{prop}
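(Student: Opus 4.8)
The plan is to express $\mu$ in the asserted basis $\SA$ and then recover each coordinate by applying a suitable homomorphism --- either a valuation modulo~$2$ or a Legendre symbol --- that annihilates all but the targeted basis vector. Concretely, write the square class of $\mu$ as
\[
\mu \equiv \beta_1^{\varepsilon_1}\dotsm \beta_n^{\varepsilon_n}\cdot \la_1^{e_1}\dotsm \la_s^{e_s} \pmod{\squares{\K}},
\]
where $\la_j$ is the element attached to $\gp_j\in\SmS$ by the previous proposition, so that $(\varepsilon_1,\dotsc,\varepsilon_n;e_1,\dotsc,e_s)$ are exactly the coordinates to be identified.

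For the valuation coordinates $e_j$ I would apply $\ord_{\gp_j}$ modulo~$2$. Since $\ord_{\gp_j}$ is additive on $\sqgd$, it suffices to record the parity of $\ord_{\gp_j}$ on each basis element. Every $\beta_i$ lies in $\SingX$ and hence has even valuation everywhere, so these terms drop out. Each $\la_k$ is $\bigl(\Sprime\cup\{\gp_k\}\bigr)$-singular by the previous proposition, so $\ord_{\gp_j}\la_k$ is even whenever $\gp_j\notin \Sprime\cup\{\gp_k\}$; because $\gp_j\in\SmS$ is disjoint from $\Sprime$, this occurs precisely when $k\neq j$, while for $k=j$ we have $\ord_{\gp_j}\la_j\odd$. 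Thus the only surviving contribution is $e_j$, yielding $e_j\equiv \ord_{\gp_j}\mu\pmod 2$.

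For the coordinates $\varepsilon_i$ I would instead apply the Legendre symbol $\Legendre{\cdot}{\gb_i}$. The preliminary point is that $\Legendre{\cdot}{\gb_i}$ is multiplicative on the subgroup of $\sqgd$ consisting of classes with even valuation at $\gb_i$, with values in $\{\pm1\}$; so I must first check that every factor above, and $\mu$ itself, lies in this subgroup. This is where disjointness of $\GB$ and $\setS$ enters: the $\beta_i$ are singular and hence have even valuation at $\gb_i$; each $\la_k$ is $\bigl(\Sprime\cup\{\gp_k\}\bigr)$-singular with $\Sprime\cup\{\gp_k\}\subseteq\setS$ disjoint from $\gb_i$; and $\mu$ is $\setS$-singular while $\gb_i\notin\setS$. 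Once multiplicativity is legitimate, the evaluation is immediate: by compatibility of $\SB$ and $\GB$ the factors $\Legendre{\beta_k}{\gb_i}$ collapse to $(-1)^{\varepsilon_i}$, and by the first assertion of the previous proposition $\Legendre{\la_k}{\gb_i}=1$ for every $k$, so those factors contribute nothing. Hence $\Legendre{\mu}{\gb_i}=(-1)^{\varepsilon_i}$.

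I expect the only genuine subtlety to be the bookkeeping that licenses multiplicativity of the Legendre symbol, namely confirming that each element involved has even valuation at $\gb_i$; everything else is a direct substitution using the defining properties of compatible sets together with the two conclusions of the previous proposition.
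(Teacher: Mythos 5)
Your proof is correct and takes essentially the same approach as the paper: the valuation-parity argument identifying the $e_j$ is word-for-word the paper's. For the $\varepsilon_i$, the paper reduces to the singular case by passing to $\mu' := \mu\cdot\la_1^{e_1}\dotsm\la_s^{e_s}$ and citing Proposition~\ref{prop:coords}, which is just a packaged form of the compatibility-and-multiplicativity computation you perform directly, resting on the same two facts you use: $\Legendre{\beta_k}{\gb_i}=-1$ exactly when $k=i$, and $\Legendre{\la_k}{\gb_i}=1$ for all $k$.
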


\begin{proof}
Write $\mu = \beta_1^{\varepsilon_1}\dotsm \beta_n^{\varepsilon_n}\cdot \la_1^{e_1}\dotsm \la_s^{e_s}$ and fix $j\leq s$. Let $\gp_j\in \SmS$ be the place corresponding to~$\la_j$. Then $\la_j$ is the only element among $\beta_1, \dotsc, \beta_n, \la_1, \dotsc, \la_s$ that has an odd valuation at~$\gp_j$. Therefore 
\[
\ord_{\gp_j}\mu \equiv e_j\cdot \ord_{\gp_j}\la_j\equiv e_j\pmod{2}. 
\]
Take now $\mu' := \mu\cdot \la_1^{e_1}\dotsm \la_s^{e_s}$. Then $\mu'$ and $\beta_1^{\varepsilon_1}\dotsm \beta_n^{\varepsilon_n}$ are in the same square-class. This means that $\mu'$ is singular and Proposition~\ref{prop:coords} says that its coordinates must satisfy the assertion.
\end{proof}

Now its time to forge the preceding propositions into actual algorithms. We begin with the one that constructs~$\lap$.

\begin{alg}\label{alg:lambda_p}
Let $\SB = \{\beta_1, \dotsc, \beta_n\}$ be a basis of~$\SingX$ and $\GB = \{\gb_1, \dotsc, \gb_n\}$ a compatible set of places. Let $\GB' := \{\gb_{n+1}, \dotsc, \gb_m\}\subset X$, $m\geq n$ be an auxiliary set disjoint with $\GB$ and such that $\GB\cup \GB'$ generate the whole Picard group of~$X$. Further, let $\Sprime = \{\gs_1, \dotsc, \gs_s\}$  be a nonempty subset of~$X$ disjoint with~$\GB\cup \GB'$ and linearly independent in $\PPicX$. Given a place $\gp\in X$ such that $\class{\gp} + 2\PicX\in \lin_{\FF_2}\bigl\{ \class{\gs_i} + 2\PicX\st i\leq s\bigr\}$, this algorithm finds an element $\lap\in \un$ satisfying the first assertion of Proposition~\ref{prop:S-singular_basis}.
\begin{enumerate}
\item\label{st:lap:coords} Use Proposition~\ref{prop:coords} to compute the coordinates of $\gp$ and $\gs_1, \dotsc, \gs_s$ with respect to the basis~$\GB$ of $\PPicX$.
\item Using linear algebra \textup(over~$\FF_2$\textup) find $\varepsilon_1, \dotsc, \varepsilon_s\in \{0,1\}$ such that
\[
\class{\gp}\equiv \varepsilon_1\class{\gs_1} + \dotsb + \varepsilon_s\class{\gs_s}\pmod{2\PicX}.
\]
\item\label{st:lap:setT} Select the subset $\setT:= \{\gs_i\st \varepsilon_i = 1\} \subset \Sprime$ and let $t := \card{\setT}$. Denote the elements of~$\setT$ by $\gt_1, \dotsc, \gt_t$.
\item Take a lattice $\ZZ^{1+t+m}$ and a map $\psi: \ZZ^{1+t+m}\to \PicX$ given by the formula
\[
\psi\bigl( (v_0, \dotsc, v_t, v_{t+1}, \dotsc, v_{t+m})\bigr) := v_0\class{\gp} + \sum_{1\leq i\leq t} v_i\class{\gt_i} + \sum_{1\leq i\leq m} 2v_{t+i}\class{\gb_i}.
\]
\item\label{st:lap:kernel} Using linear algebra \textup(over~$\ZZ$\textup) construct a sub-lattice $V := \ker\psi$.
\item Find a vector $v = (v_0, \dotsc, v_t)\in V$ such that $v_0\odd$.
\item Let $\la$ be a generator of the Riemann--Roch space 
\[
\Ell\biggl(v_0\gp + \sum_{1\leq i\leq t}v_i\gt_i + \sum_{1\leq i\leq m}2v_{t+i}\gb_i\biggr).
\]
\item Set $\lap := \la\cdot \prod_{i\leq n} \beta_i$, where $(-1)^{e_i} = \Legendre{\la}{\gb_i}$.
\item Output $\lap$.
\end{enumerate}
\end{alg}

\begin{poc}
Assume that $v = (v_0,\dotsc, v_{t+m})$ sits in the lattice~$V$ constructed in step~\eqref{st:lap:kernel}. Then there is $\la_1\in \un$ such that
\[
\divX\la_1 = v_0\gp + \sum_{1\leq i\leq t}v_i\gt_i + \sum_{1\leq i\leq m}2v_{t+i}\gb_i.
\]
In particular $\la_1$ is $\bigl(\setT\cup \{\gp\}\bigr)$-singular, so also $\bigl(\Sprime\cup\{\gp\}\bigr)$-singular. We claim that $V$ contains an element~$v$ with an odd first coordinate. Indeed, by assumption we have
\[
\class{\gp} \equiv \sum_{i\leq s}\varepsilon_i \class{\gs_i} = \sum_{i\leq t}\class{\gt_i}\pmod{2\PicX}.
\]
Therefore, as in the proof of Proposition~\ref{prop:S-singular_basis}, there are: $\la_2\in \un$ and $\SD\in \DivX$ such that
\[
\divX\la_2 = \gp + \sum_{i\leq t}\gt_i + 2\SD.
\]
By assumption, $\GB\cup \GB'$ generate the whole Picard group. Thus the class of~$\SD$ can be written as $\class{\SD} = v_{t+1}\class{\gb_1} + \dotsb + v_{t+m}\class{\gb_m}$, for some integers $v_{t+1}, \dotsc, v_{t+m}$. This means that there is $\mu\in \un$ such that
\[
\divX\mu = -\SD + \sum_{1\leq i\leq m} v_{t+i}\gb_i.
\]
Consequently we obtain
\[
\divX(\la_2\mu^2) = (1 + 2\ord_\gp\mu)\cdot \gp + \sum_{1\leq i\leq t} (1 + 2\ord_{\gt_i}\mu)\cdot \gt_i + \sum_{1\leq i\leq m} 2v_{t+1}\gb_i
\]
and trivially $\la_2$, $\la_2\mu^2$ are in the same square class. 

Denote $\la := \la_2\mu^2$. By the preceding section, $\la$ is $\bigl(\Sprime\cup\{\gp\}\bigr)$-singular and it has an odd valuation at~$\gp$. In particular 
\[
v = (\ord_\gp\la, \ord_{\gt_1}\la, \dotsc, \ord_{\gt_t}\la, \ord_{\gb_1}\la,\dotsc, \ord_{\gb_m}\la)\in V
\]
has an odd first coordinate. This proves the claim

Now let $I\subset \{1, \dotsc, n\}$ be the set of these indices for which $\Legendre{\la}{\gb_i}= -1$. Then $\lap := \la\cdot \prod_{i\in I}\beta_i$ remains to be $\bigl(\Sprime\cup\{\gp\}\bigr)$-singular and has an odd valuation at~$\gp$, but now it is a local square at every $\gb\in \GB$. Hence it is the element we are after.
\end{poc}

\begin{rem}
The set $\GB'$ appearing in the above algorithm may be possibly empty. It is so, when $\PicZeroX$ can be decomposed into a direct sum of cyclic groups of even orders. 
\end{rem}

We are now ready to construct the group of $\setS$-singular elements. The correctness of the next algorithm follows from Proposition~\ref{prop:S-singular_basis}.

\begin{alg}\label{alg:S-singular_basis}
Given a finite \textup(possibly empty\textup) set of places $\setS\subset X$, this algorithm constructs a basis \textup(over~$\FF_2$\textup) of the group $\SingXS$ of $\setS$-singular singular elements modulo squares.
\begin{enumerate}
\item Find a basis~$\GB$ of $\PPicX$ disjoint with~$\setS$ and a set $\GB'\subset X$ disjoint with $\GB\cup \setS$ and such that $\GB\cup \GB'$ generates $\PicX$.
\item Use Algorithm~\ref{alg:singular_basis} to construct a basis~$\SB$ of~$\SingX$ compatible with~$\GB$.
\item Use Proposition~\ref{prop:coords} to compute the coordinates with respect to~$\GB$ of all $\gs\in \setS$.
\item Using these coordinates, find a maximal subset~$\Sprime$ of~$\setS$ linearly independent in $\PPicX$.
\item If $\Sprime = \setS$ \textup(in particular if $\setS$ is empty\textup) output the basis~$\SB$ and terminate.
\item Denote $V := \lin_{\FF_2}\bigl\{ \class{\gs}+2\PicX\st \gs\in \Sprime\bigr\}$ and $s := \dim V = \card{\Sprime}$.
\item For every $\gp\in \SmS$ do the following:
	\begin{enumerate}
	\item Find the coordinates $\varepsilon_1, \dotsc, \varepsilon_s\in \{0,1\}$ of~$\gp$ in~$V$ with respect to the basis~$\Sprime$.
	\item Set $\setT := \{ \gs_i\in \Sprime\st \varepsilon_i = 1\}$.
	\item Execute Algorithm~\ref{alg:lambda_p}, but skip steps (\ref{st:lap:coords}--\ref{st:lap:setT}), and construct $\lap\in \un$ that satisfies the first assertion of Proposition~\ref{prop:S-singular_basis}.
	\end{enumerate}
\item Output $\SB\cup \SL$, where $\SL := \{\lap\st \lap\in \SmS\}$.
\end{enumerate}
\end{alg}

\section{Random generation of singular elements}\label{sec:random}
Algorithms presented in the previous two sections rely on an explicit description of $\PicX$. While algorithms that compute the Picard group are known and have been implemented in existing computer algebra systems, it is also known that the whole process can take considerable amount of time. If we need just one, random singular element of~$\K$, we may do better than construct the whole group~$\SingX$.

\begin{alg}\label{alg:las_vegas_beta}
Given a global function field~$\K$, this algorithm constructs a random singular element of~$\K$.
\begin{enumerate}
\item Pick two random places $\gp, \gq$ of the same degree.
\item\label{st:k} Find $k := \min\{ j\geq 1\st \dim( j\gp - j\gq) \neq 0\}$.
\item If $k$ is even, output a generator of the Riemann-Roch space $\Ell(k\gp-k\gq)$ and terminate.
\item\label{st:constant_singular} Otherwise, let $\zeta\in \units{\Fq}$ be a non-square constant. Output at random \textup(with probability $\sfrac12$\textup) either~$\zeta$ or~$1$.
\end{enumerate}
\end{alg}

\begin{poc}
By the finiteness of $\PicZeroX$ (see \cite[Proposition~V.1.3]{Stichtenoth1993}), the class of $\SD := \gp - \gq$ is $k$-torsion for some positive integer~$k$. Therefore $k\cdot \SD$ is principal, while $i\cdot \SD$ is not for any nonzero $i < k$. Hence there is $\beta\in \un$ such that $k\cdot \SD = \divX\beta$. In addition, $\beta$ generates the Riemann--Roch space  $\Ell(k\cdot \SD)$ since this space has dimension one by \cite[Corollary~1.4.12]{Stichtenoth1993}. We claim that $\beta$ is a singular element of~$\K$. Write the divisor $\divX\beta$ of~$\beta$ in the form
\[
\divX\beta = m\gp + n\gq + \sum_{i\leq s} k_i\gr_i,
\]
where $\gr_1, \dotsc, \gr_s\in X$ are distinct from $\gp, \gq$. We have $\divX\beta \geq -\SD = k\gq - k\gp$. Therefore $m\geq -k$, $n\geq k$ and $k_i\geq 0$ for every index~$i$. In particular, $\gp$ is the only pole of~$\beta$. Furthermore, we have
\[
0 = \deg(\divX\beta) = m\cdot \deg\gp + n\cdot \deg\gq + \sum_{i\leq s} k_i\cdot \deg \gr_i.
\]
From the fact that $\gp$ and~$\gq$ have the same degree we infer that
\[
-k\cdot \deg \gp 
\geq -n\cdot \deg \gp 
= -n\cdot \deg \gq 
= m\cdot \deg\gp + \sum_{i\leq s} k_i\cdot \deg \gr_i
\geq m\cdot \deg \gp.
\]
It follows that $m$ equals $-k$. Consequently we obtain
\begin{multline*}
0
= -k\cdot \deg \gp + n\cdot \deg \gq + \sum_{i\leq s}k_i\cdot \deg \gr_i\\
= (n-k)\cdot \deg \gq + \sum_{i\leq s} k_i\cdot \deg \gr_i
\geq (n-k)\cdot \deg \gq
\geq 0.
\end{multline*}
Thus we have $n = k$ and $\divX\beta = k\gq - k\gp= -\SD$. Hence $\ord_\gr\beta \even$ for every $r\in X$, as claimed.
\end{poc}

It remains to analyze the distribution of elements of~$\SingX$ produced by the algorithm. Unfortunately the distribution depends on the structure of the Picard group of~$X$. (And in an actual implementation it is further impacted by the quality of the generator of random places of~$X$.) Thus in general the distribution does not have to be uniform, unless $\PicZeroX$ is an elementary $2$-group. 

\begin{lem}
Let $G$ be a finite abelian group of a form $G = G_1\oplus \dotsb \oplus G_n$, where every~$G_i$ is cyclic of order $\card{G_i} = 2^{k_i}\cdot t_i$ with $2\nmid t_i$. Let $g\in G$ be a random \textup(uniformly distributed\textup) element. The probability that the order of~$g$ is odd equals $2^{-(k_1 + \dotsb + k_n)}$.
\end{lem}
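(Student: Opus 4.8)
The plan is to reduce the problem to a product of independent per-factor computations, using the direct sum structure $G = G_1 \oplus \dotsb \oplus G_n$. Writing a uniformly random $g \in G$ as $g = (g_1, \dotsc, g_n)$ with each $g_i \in G_i$ chosen independently and uniformly, I would observe that the order of $g$ is the least common multiple $\mathrm{lcm}(\ord g_1, \dotsc, \ord g_n)$. Hence the order of $g$ is odd if and only if each $\ord g_i$ is odd. Since the coordinates are independent, the sought probability factors as
\[
\Pr(\ord g \text{ odd}) = \prod_{i=1}^{n} \Pr(\ord g_i \text{ odd}).
\]
This reduces everything to a single cyclic factor.

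For one cyclic factor, I would argue as follows. Fix $i$ and write $\card{G_i} = 2^{k_i} t_i$ with $2 \nmid t_i$. An element $g_i$ of the cyclic group $G_i$ has odd order precisely when its order divides $t_i$, which happens exactly for the elements of the unique cyclic subgroup of order $t_i$ (the set of elements killed by multiplication by $t_i$, equivalently the $2^{k_i}$-th power subgroup). That subgroup has exactly $t_i$ elements out of $\card{G_i} = 2^{k_i} t_i$, so
\[
\Pr(\ord g_i \text{ odd}) = \frac{t_i}{2^{k_i} t_i} = 2^{-k_i}.
\]
Multiplying these over all $i$ gives $\Pr(\ord g \text{ odd}) = 2^{-(k_1 + \dotsb + k_n)}$, as asserted.

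The only genuine content is the cyclic case, and the one point that deserves care is the claim that the odd-order elements of a cyclic group of order $2^{k}t$ form a subgroup of size exactly $t$. I would justify this either by the standard fact that a cyclic group of order $N$ has a unique subgroup of each order dividing $N$, or more directly by the isomorphism $G_i \cong \ZZ/2^{k_i}\ZZ \oplus \ZZ/t_i\ZZ$ coming from the Chinese Remainder Theorem, under which $\ord g_i$ is odd iff the $\ZZ/2^{k_i}\ZZ$-component vanishes; the vanishing locus of that component has measure $2^{-k_i}$. I do not anticipate a serious obstacle: the argument is elementary group theory, and the main thing to get right is the independence step and the clean statement that oddness of the order is a coordinatewise (rather than joint) condition.
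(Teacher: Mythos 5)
Your proof is correct and follows essentially the same route as the paper: decompose $g$ coordinatewise, observe that odd order is equivalent to all coordinates having odd order, and count the $t_i$ odd-order elements in each cyclic factor to get the per-factor probability $2^{-k_i}$. You merely spell out details (independence of coordinates, the unique subgroup of order $t_i$) that the paper leaves implicit.
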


\begin{proof}
Write $g$ as $g = (g_1, \dotsc, g_n)$ where $g_i\in G_i$. The order of~$g$ is odd if and only if the orders of all~$g_i$ are odd. Now, $G_i$ is cyclic and $\card{G_i} = 2^{k_i}\cdot t_i$, hence $G_i$ contains precisely $t_i$ elements of odd order. Thus the probability that the order of~$g_i$ is odd equals $2^{-k_i}$ and consequently the probability that the order of~$g$ is odd equals $2^{-k_1}\dotsm 2^{-k_n}$, as claimed.
\end{proof}

The group~$\SingX$ can be expressed in a form $\{1,\zeta\}\oplus \SingX'$, where $\SingX'$ is a subgroup of~$\SingX$. Thus, probability of random picking a constant singular element equals $\sfrac{2}{\card{\SingX}}$, provided that the distribution is uniform. The next corollary gives a partial answer to the question of the distribution of elements produced by Algorithm~\ref{alg:las_vegas_beta}.

\begin{cor}\label{cor:probability}
Probability that Algorithm~\ref{alg:las_vegas_beta} outputs a constant singular element \textup(i.e. that step~\eqref{st:constant_singular} gets executed\textup) is less than or equal to $\sfrac{2}{\card{\SingX}}$.
\end{cor}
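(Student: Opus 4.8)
The plan is to reduce the event ``the algorithm returns a constant'' to a parity statement about the order of $\class{\gp-\gq}$ in $\PicZeroX$, and then to feed that statement into the preceding lemma.

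First I would extract from the proof of correctness of Algorithm~\ref{alg:las_vegas_beta} that the integer $k$ of step~\eqref{st:k} is exactly the order of the class $\class{\gp-\gq}$ in $\PicZeroX$. Indeed, $\gp-\gq$ has degree $0$, and a degree-zero divisor has a nonzero global section (equivalently $\dim\neq 0$) if and only if it is principal; hence $k=\min\{j\geq 1\st j\cdot(\gp-\gq)\text{ is principal}\}$ is precisely the order of $\class{\gp-\gq}$. Step~\eqref{st:constant_singular} is reached exactly when $k$ is odd, and it returns a constant. When $k$ is even the algorithm instead outputs a generator of $\Ell(k\gp-k\gq)$, whose divisor equals $k\gq-k\gp\neq 0$ (note $\gp=\gq$ would force $k=1$), hence a \emph{non}-constant singular element. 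Therefore the algorithm outputs a constant singular element if and only if $\class{\gp-\gq}$ has odd order in $\PicZeroX$.

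Next I would invoke the previous lemma for the class $g:=\class{\gp-\gq}$. Since $\gp$ and $\gq$ are drawn independently with the same distribution, $g$ is a difference of two independent, identically distributed classes; in the idealized regime in which the places are chosen uniformly, $g$ is then itself uniformly distributed over $\PicZeroX$ (the difference of two independent uniform elements of a finite abelian group is uniform). Writing the Smith Normal Form $\PicZeroX=C_1\oplus\dotsb\oplus C_m$ with $\card{C_i}=2^{k_i}t_i$ and $2\nmid t_i$, the lemma gives
\[
\Pr(g\text{ has odd order})=2^{-(k_1+\dotsb+k_m)},
\]
which by the previous paragraph is the probability that step~\eqref{st:constant_singular} is executed.

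Finally I would compute $\card{\SingX}$. The decomposition $\SingX=\{1,\zeta\}\oplus\SingX'$ isolates the two constant classes, while the non-constant part $\SingX'$ is isomorphic to the $2$-torsion $\PicZeroX[2]$ (a singular $\la$ corresponds to the class of $\sfrac{1}{2}\divX\la$, which lies in $\PicZeroX$ and is $2$-torsion), and $\dim_{\FF_2}\PicZeroX[2]=\rk\PicZeroX$. Hence $\card{\SingX}=2^{1+\rk\PicZeroX}$ and $\sfrac{2}{\card{\SingX}}=2^{-\rk\PicZeroX}$. Because $\PicZeroX/2\PicZeroX\cong\bigoplus_i C_i/2C_i$ and $C_i/2C_i\cong\FF_2$ precisely when $\card{C_i}$ is even, the number of factors $C_i$ of even order equals $\rk\PicZeroX=\#\{i\st k_i\geq 1\}$. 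The elementary estimate $k_1+\dotsb+k_m\geq\#\{i\st k_i\geq 1\}=\rk\PicZeroX$ then yields
\[
2^{-(k_1+\dotsb+k_m)}\leq 2^{-\rk\PicZeroX}=\frac{2}{\card{\SingX}},
\]
which is the assertion. I expect the crux to be this final comparison: the bound is genuinely an inequality, becoming an equality exactly when every nonzero $k_i$ equals $1$, i.e.\ when the Sylow $2$-subgroup of $\PicZeroX$ is elementary abelian---precisely the case in which the surrounding discussion observes that the output is uniformly distributed on $\SingX$. A secondary obstacle worth stating explicitly is that the probability computation rests on the uniformity of $g$ over $\PicZeroX$; as the text warns, a poor generator of random places can spoil this uniformity, so the clean bound should be read as a statement about the idealized model underlying the lemma.
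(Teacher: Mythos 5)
Your proof is correct and its skeleton matches the paper's: reduce the event ``step~\eqref{st:constant_singular} is executed'' to ``$\class{\gp-\gq}$ has odd order in $\PicZeroX$'', apply the preceding lemma to bound that probability by $2^{-\rk\PicZeroX}$, and then identify $2^{-\rk\PicZeroX}$ with $\sfrac{2}{\card{\SingX}}$. The one genuine divergence is in this last identification. The paper obtains $\rk\PicZeroX=\rk\SingX-1$ by citing \cite{CKR2018}: it picks an auxiliary place $\go$ of odd degree and chains $\rk\PicZeroX=\rk\Pic(X\setminus\{\go\})=\rk\Sing{\{\go\}}-1=\rk\SingX-1$. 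You instead prove the identity from scratch, observing that $\la\mapsto\class{\tfrac{1}{2}\divX\la}$ induces an isomorphism from $\SingX$ modulo the constant classes $\{1,\zeta\}$ onto the $2$-torsion subgroup of $\PicZeroX$, whence $\card{\SingX}=2^{1+\rk\PicZeroX}$; this is self-contained and explains structurally why the constants contribute exactly the factor~$2$, at the cost of being longer than the paper's citation. You also make explicit two things the paper leaves implicit, both correctly: that the $k$ of step~\eqref{st:k} is precisely the order of $\class{\gp-\gq}$ in $\PicZeroX$ (so the constant branch is taken if and only if that order is odd), and that the bound is an equality exactly when the $2$-Sylow subgroup of $\PicZeroX$ is elementary abelian, which dovetails with the paper's surrounding remark on uniformity. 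One small caution: uniformity of $\class{\gp-\gq}$ does not follow merely from choosing the \emph{places} uniformly, since the fibres of the map from places of a fixed degree to $\PicZeroX$ need not have equal sizes; the correct justification is the Chebotarev equidistribution that the paper invokes. Since you explicitly flag your model as idealized, this is a matter of attribution rather than a gap.
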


\begin{proof}
As in the proof of correctness of the algorithm, denote $\SD := \gp-\gq$, where $\gp, \gq$ are two random points of the same degree. Recall that $\PicX\cong \PicZeroX\oplus \ZZ$, where the second coordinate of the class of a divisor is just its degree. It is a known consequence of Chebotarev density theorem, that classes of places of~$X$, projected onto the first coordinate, are equidistributed in $\PicZeroX$. Hence $\class{\SD}$ may be any one of elements of $\PicZeroX$ with equal probability. The preceding lemma asserts that the probability that $\class{\SD}$ has an odd order cannot exceed $2^{-\rk\PicZeroX}$. Take a place $\go\in X$ of odd degree. By \cite[Proposition~2.3 and Lemma~2.6]{CKR2018} we have
\[
\rk\PicZeroX
= \rk\Pic(X\setminus \{\go\})
= \rk\Sing{\{\go\}} - 1 %\rk\Sing{X\setminus\{\go\}} - 1
= \rk\SingX - 1.
\]
Consequently the probability of executing step~\eqref{st:constant_singular} is bounded from above by the quantity
\[
2^{-\rk\PicZeroX} = 2^{1 - \rk\SingX} = \frac{2}{\card{\SingX}}.
\]
This proves the assertion.
\end{proof}

\bibliographystyle{plain} 
\bibliography{sms}
\end{document}